\newtheorem{theorem}{Theorem}
\newtheorem{conjecture}{Conjecture}
\def\Ddots{\mathinner{\mkern1mu\raise\p@
\vbox{\kern7\p@\hbox{.}}\mkern2mu
\raise4\p@\hbox{.}\mkern2mu\raise7\p@\hbox{.}\mkern1mu}}
\title{Extremal numbers of cycles revisited}
\author{David Conlon\thanks{Department of Mathematics, California Institute of Technology, Pasadena, CA 91125. Email: {\tt dconlon@caltech.edu}.}}
\date{}
\begin{document}
\maketitle

\begin{abstract}
We give a simple geometric interpretation of an algebraic construction of Wenger that yields $n$-vertex graphs with no cycle of length $4$, $6$ or $10$ and close to the maximum number of edges.
\end{abstract}

What is the maximum number of edges in an $n$-vertex graph containing no cycle $C_{2k}$ of length $2k$? This is one of the most closely studied yet elusive problems in combinatorics. 
Despite many decades of intense interest in this and related problems in extremal graph theory~\cite{Ver16}, the answer is only reasonably well understood for cycles of length $4$, $6$ and $10$. 

If we write $\textrm{ex}(n, H)$ for the maximum number of edges in an $n$-vertex graph containing no copy of the graph $H$, then a result attributed to Erd\H{o}s~\cite{BS74, Erd65} says that $\textrm{ex}(n, C_{2k}) \leq C n^{1 + 1/k}$ for some constant $C$ depending only on $k$. 
For $C_4$, a matching lower bound, due to Klein, already appeared in a paper of Erd\H{o}s~\cite{Erd38} from 1938. For $C_6$ and $C_{10}$, constructions matching the upper bound were found by Benson~\cite{Ben66} and Singleton~\cite{Sin66} in 1966, though several alternative constructions have since been found~\cite{LU95, MM05, Wen91}. None of these constructions can be described as simple, though that of Wenger~\cite{Wen91} has the best claim. Here we show that this claim is justified, by rephrasing his algebraic construction in geometric terms that make its properties manifest.

Let $q$ be a prime power and $\mathbb{F}_q$ the finite field of order $q$. For each $x \in \mathbb{F}_q^k$ and $z \in \mathbb{F}_q$, we form the line 
\[\{x + y \cdot (1, z, z^2, \dots, z^{k-1}) : y \in \mathbb{F}_q\}.\]
The number of distinct lines of this form is exactly $q^k$, since there are $q$ distinct directions, determined by the different values for $z$, and exactly $q^{k-1}$ parallel lines in each direction. We form a bipartite graph $D_k(q)$ between two sets $P$ and $L$ of order $q^k$, where the vertices of $P$ are indexed by the points of $\mathbb{F}_q^k$, the vertices of $L$ are indexed by the $q^k$ lines described above and there is an edge between $p \in P$ and $\ell \in L$ if and only if $p \in \ell$. Our main result is as follows.

\begin{theorem} \label{thm:main}
For $k = 2$, $3$ and $5$, $D_k(q)$ is a $C_{2k}$-free bipartite graph between sets $P$ and $L$ of order $n = q^k$ with $n^{1+1/k}$ edges.
\end{theorem}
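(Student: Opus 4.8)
The plan is to handle the two assertions separately. The edge count is immediate: each of the $q^k$ lines contains exactly $q$ points, one for each value of the parameter $y$, so the total number of incident pairs, and hence the number of edges of $D_k(q)$, equals $q\cdot q^k=q^{k+1}=n^{1+1/k}$. All of the content lies in showing that $D_k(q)$ contains no cycle of length $2k$ for $k\in\{2,3,5\}$.

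Suppose for contradiction that it does. Since $D_k(q)$ is bipartite, such a cycle has the form $p_1,\ell_1,p_2,\ell_2,\dots,p_k,\ell_k,p_1$, where the points $p_i\in P$ are pairwise distinct, the lines $\ell_i\in L$ are pairwise distinct, and $p_i,p_{i+1}\in\ell_i$ for all $i$ (indices modulo $k$). Write $v(z)=(1,z,z^2,\dots,z^{k-1})$ and let $z_i$ denote the direction of $\ell_i$. I would record two observations. The first is geometric: two distinct lines of our family that share a direction are parallel, hence disjoint, so $\ell_i\ne\ell_{i+1}$ together with $p_{i+1}\in\ell_i\cap\ell_{i+1}$ forces $z_i\ne z_{i+1}$, i.e.\ consecutive directions differ. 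For the second, since $p_i$ and $p_{i+1}$ are distinct points of $\ell_i$ we may write $p_{i+1}-p_i=t_i\,v(z_i)$ with $t_i\in\mathbb{F}_q$ nonzero, and telescoping around the cycle gives
\[ \sum_{i=1}^{k} t_i\,v(z_i)=0, \]
so $(t_1,\dots,t_k)$ is a vector with no zero coordinate lying in the kernel of the $k\times k$ matrix whose columns are $v(z_1),\dots,v(z_k)$, a Vandermonde matrix in the $z_i$.

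For $k=2$ and $k=3$ this finishes the argument immediately: in a cycle of length at most $3$ every pair of indices is adjacent, so by the first observation the $z_i$ are pairwise distinct, whence the Vandermonde matrix is invertible, its kernel is trivial, and $t_i\ne 0$ is contradicted. The case $k=5$ is where the real work is, since now only consecutive directions are forced to differ. I would split into two cases according to how often values repeat among $z_1,\dots,z_5$. If some direction occurs exactly once, then grouping the equal terms in $\sum t_i v(z_i)=0$ and using that the vectors $v(w)$ for distinct values $w$ are linearly independent (again a Vandermonde fact), the coefficient of that direction, which is a single $t_i$, must vanish, a contradiction. Otherwise every value occurs at least twice, so some value occurs at least three times among the five positions; but any three of the five cyclically arranged positions contain two consecutive ones, as the independence number of $C_5$ is $2$, producing two consecutive lines with a common direction and contradicting the first observation. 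This settles $k=5$. The same bookkeeping also explains why $k=4$ is genuinely different: there the repetition pattern $2+2$ survives both cases.
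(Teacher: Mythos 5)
Your proof is correct and follows essentially the same route as the paper: the telescoping identity $\sum_i t_i v(z_i)=0$ with nonzero coefficients, the Vandermonde independence of the $v(z_i)$, the observation that consecutive lines cannot share a direction, and for $k=5$ the fact that a direction repeated three times must occupy two consecutive positions of the $5$-cycle. The only cosmetic difference is that you phrase the $k=2,3$ cases via invertibility of the Vandermonde matrix rather than via the paper's ``every direction appears at least twice'' observation, but the content is identical.
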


\begin{proof}
The bound on the number of edges follows from the fact that each of the $q^k$ lines contains exactly $q$ points, so there are $q^{k + 1} = n^{1 + 1/k}$ edges in total. Note now that any cycle in $D_k(q)$ is of the form $p_1 \ell_1 p_2 \ell_2 \dots p_t \ell_t p_1$, where $p_i \in P$ and $\ell_i \in L$ for all $1 \leq i \leq t$. We make two simple observations about these cycles:

\begin{itemize}

\item
For any $1 \leq i \leq t$ (taken mod $t$), $\ell_i$ and $\ell_{i+1}$ cannot be parallel. 

\item
If $t \leq k$, then, for any $1 \leq i \leq t$, there must be another line $\ell_{i'}$ with $1 \leq i' \leq t$ parallel to $\ell_i$.

\end{itemize}

The first observation is obvious, since two parallel lines, both passing through the point $p_{i+1}$, must coincide. For the second observation, note that if $\ell_i$ has direction determined by $z_i$ for each $1 \leq i \leq t$, then the difference $p_{i+1} - p_i$ is a non-zero multiple of $(1, z_i, z_i^2, \dots, z_i^{k-1})$. Adding over all $i$, we have that
\[0 = \sum_{i = 1}^t (p_{i+1} - p_i) = \sum_{i=1}^t a_i (1, z_i, z_i^2, \dots, z_i^{k-1})\]
for some collection of non-zero coefficients $a_i$. But it is a well-known fact, proved by considering the Vandermonde determinant, that any $k$ distinct vectors of the form $(1, z, z^2, \dots, z^{k-1})$ are linearly independent. Hence, for the sum $\sum_{i=1}^t a_i (1, z_i, z_i^2, \dots, z_i^{k-1})$ to be zero, each direction must appear at least twice. That is, every line $\ell_i$ has at least one parallel line $\ell_{i'}$. We can now dispense with each case as a bulletpoint:

\begin{itemize}

\item
No $D_k(q)$ with $k \geq 2$ contains a cycle of length $4$, since any such cycle $p_1 \ell_1 p_2 \ell_2 p_1$ must be such that $\ell_1$ and $\ell_2$ are both parallel and not parallel by the two observations above.

\item
No $D_k(q)$ with $k \geq 3$ contains a cycle of length $6$, since any such cycle $p_1 \ell_1 p_2 \ell_2 p_3 \ell_3 p_1$ must again be such that the three  lines $\ell_1, \ell_2$ and $\ell_3$ are all parallel and all not parallel by our two observations.

\item
No $D_k(q)$ with $k \geq 5$ contains a cycle of length $10$. Indeed, any such cycle $p_1 \ell_1 p_2 \ell_2 p_3 \ell_3 p_4 \ell_4 p_5 \ell_5 p_1$ must have one group of two parallel lines and another group of three parallel lines. But then it is impossible to enforce the condition that $\ell_i$ not be parallel to $\ell_{i+1}$ for all $1 \leq i \leq 5$ (taken mod $5$). \qedhere
\end{itemize} 
\end{proof}

Suppose now that $\theta_{k, \ell}$ is the graph consisting of $\ell$ internally disjoint paths of length $k$, each with the same endpoints. In particular, $\theta_{k,2}$ is just $C_{2k}$, so the problem of determining $\textrm{ex}(n, \theta_{k, \ell})$ extends the problem of determining $\textrm{ex}(n, C_{2k})$. The extremal numbers for these theta graphs were first studied by Faudree and Simonovits \cite{FS83}, who proved that $\textrm{ex}(n, \theta_{k, \ell}) \leq C n^{1 + 1/k}$ for a constant $C$ depending only on $k$ and $\ell$. On the other hand, a result of Conlon~\cite{Con19} says that for any $k$ there exists a natural number $\ell$ and a positive constant $c$ such that $\mathrm{ex}(n, \theta_{k, \ell}) \geq c n^{1 + 1/k}$.

The following question therefore becomes valid: for each $k$, what is the smallest $\ell$ such that $\mathrm{ex}(n, \theta_{k, \ell}) \geq c n^{1 + 1/k}$ for some positive $c$? The results on $\textrm{ex}(n, C_{2k})$ show that for $k = 2$, $3$ or $5$ we can take $\ell = 2$. A much more recent result of Verstra\"ete and Williford \cite{VW19} says that for $k = 4$ it suffices to take $\ell = 3$. As a bonus, we show that the graph $D_4(q)$ has a one-sided version of the property satisfied by the Verstra\"ete--Williford graph, thus giving a natural interpolation between the cases $k = 3$ and $5$.

\begin{theorem}
$D_4(q)$ is a bipartite graph between sets $P$ and $L$ of order $n = q^4$ with $n^{5/4}$ edges such that any two vertices in $P$ have at most $2$ paths of length $4$ between them.
\end{theorem}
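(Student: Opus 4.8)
The plan is to translate a length-$4$ path between two vertices of $P$ into a decomposition of a single vector of $\mathbb{F}_q^4$, and then to bound the number of such decompositions using exactly the Vandermonde independence exploited in the proof of Theorem~\ref{thm:main}. Fix distinct $p, p' \in P$ and set $w = p' - p \neq 0$; since $D_4(q)$ is bipartite, any length-$4$ path from $p$ to $p'$ has the form $p\,\ell_1\,m\,\ell_2\,p'$ with $m \in P$ and $\ell_1, \ell_2 \in L$. Writing $v(z) = (1, z, z^2, z^3)$ and letting $z_1, z_2$ be the directions of $\ell_1, \ell_2$, the requirement that this be a genuine path translates precisely into $z_1 \neq z_2$ (two lines of the same direction through $m$ coincide, so $\ell_1 \neq \ell_2$ forces distinct directions), $m - p = a_1 v(z_1)$ with $a_1 \neq 0$ (as $m \neq p$), and $p' - m = a_2 v(z_2)$ with $a_2 \neq 0$ (as $m \neq p'$). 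Summing the last two gives $w = a_1 v(z_1) + a_2 v(z_2)$.

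Next I would verify the converse and set up a clean count. Given an ordered pair $(z_1, z_2)$ of distinct directions such that $w \in \mathrm{span}(v(z_1), v(z_2))$, the coefficients in $w = a_1 v(z_1) + a_2 v(z_2)$ are unique (by linear independence of $v(z_1), v(z_2)$), and if both are non-zero then $m := p + a_1 v(z_1)$, together with the line through $p$ of direction $z_1$ and the line through $m$ of direction $z_2$, forms a valid length-$4$ path from $p$ to $p'$ (one checks $p' - m = w - a_1 v(z_1) = a_2 v(z_2) \neq 0$, so $p' \in \ell_2$ and $m \neq p'$). This sets up a bijection between length-$4$ paths from $p$ to $p'$ and ordered pairs $(z_1, z_2)$ with $z_1 \neq z_2$ and $w = a_1 v(z_1) + a_2 v(z_2)$ for some non-zero $a_1, a_2$. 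Swapping $z_1$ and $z_2$ is an involution on this set of ordered pairs with no fixed points (the intermediate vertex changes, as $a_1 v(z_1) \neq a_2 v(z_2)$ when $a_1, a_2 \neq 0$ and $z_1 \neq z_2$), so the number of such paths is exactly twice the number of \emph{unordered} pairs $\{z_1, z_2\}$ with $z_1 \neq z_2$ admitting such a decomposition.

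It then remains to show that at most one unordered pair works. Suppose $w = a_1 v(z_1) + a_2 v(z_2) = a_3 v(z_3) + a_4 v(z_4)$ with $z_1 \neq z_2$, $z_3 \neq z_4$, all $a_i \neq 0$, and $\{z_1, z_2\} \neq \{z_3, z_4\}$. If $z_1, z_2, z_3, z_4$ were pairwise distinct, the relation $a_1 v(z_1) + a_2 v(z_2) - a_3 v(z_3) - a_4 v(z_4) = 0$ would violate the linear independence of four distinct vectors $v(z)$ in $\mathbb{F}_q^4$. Hence the two pairs share an element; being distinct as pairs, after relabelling we may take $z_1 = z_3$ with $z_2 \neq z_4$ and $z_1, z_2, z_4$ pairwise distinct, giving $(a_1 - a_3) v(z_1) + a_2 v(z_2) - a_4 v(z_4) = 0$. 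But three distinct vectors $v(z)$ in $\mathbb{F}_q^4$ are linearly independent (delete the last coordinate to get a $3\times 3$ Vandermonde), so $a_2 = 0$, a contradiction. Thus there is at most one unordered pair, the number of length-$4$ paths between any two vertices of $P$ is $0$ or $2$, and in particular at most $2$.

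I do not anticipate a genuine obstacle: after the dictionary between paths and ordered pairs of directions is in place, the argument is just two applications of the Vandermonde fact already used in Theorem~\ref{thm:main}. The only point demanding care is the bookkeeping of that dictionary — confirming that the combinatorial path conditions ($m \neq p$, $m \neq p'$, $\ell_1 \neq \ell_2$, and the endpoints being distinct) correspond exactly to $a_1 \neq 0$, $a_2 \neq 0$, $z_1 \neq z_2$, and that distinct ordered pairs really do produce distinct paths — so that the factor of $2$ is exact rather than an upper bound. The claimed edge count $n^{5/4}$ with $n = q^4$ is immediate, exactly as in Theorem~\ref{thm:main}, since each of the $q^4$ lines contains $q$ points.
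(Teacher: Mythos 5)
Your proof is correct, but it follows a genuinely different route from the paper's. The paper argues by contradiction on a hypothetical $\theta_{4,3}$ with endpoints $p, p' \in P$: it applies the two observations from Theorem~\ref{thm:main} to the $C_8$'s formed by pairs of the three paths, forcing $\ell_{1,1}$ to be parallel to both $\ell_{2,2}$ and $\ell_{2,3}$, whereupon the cycle through the second and third paths has two parallel lines meeting at $p'$, contradicting the first observation. You instead fix $p, p'$, set $w = p' - p$, and count directly: each length-$4$ path corresponds bijectively to an ordered pair of distinct directions $(z_1, z_2)$ with $w = a_1 v(z_1) + a_2 v(z_2)$ and $a_1 a_2 \neq 0$, and the linear independence of any three or four distinct vectors $v(z)$ in $\mathbb{F}_q^4$ shows at most one unordered pair of directions can arise, so the path count is exactly $0$ or $2$. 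Both arguments ultimately rest on the same Vandermonde fact, but yours is quantitative rather than a case analysis, yields the sharper ``exactly $0$ or $2$'' conclusion, and treats all pairs of distinct length-$4$ paths uniformly --- including two paths sharing a line, where the paper's step ``any two such paths yield a cycle'' produces a closed walk rather than a simple $C_8$ and implicitly leans on the $C_6$-freeness of $D_4(q)$; your dictionary disposes of that case automatically, since a repeated middle vertex or a repeated pair of directions forces the two paths to coincide. The paper's version, in exchange, is shorter on the page because it reuses the two observations verbatim as black boxes.
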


\begin{proof}
A theta graph $\theta_{4, \ell}$ whose endpoints are $p, p' \in P$  consists of paths $p \ell_{1,j} p_{2, j} \ell_{2, j} p'$ with $1 \leq j \leq \ell$. However, any two such paths yield a cycle, so we can still apply our basic observations:

\begin{itemize}

\item
No $D_k(q)$ with $k \geq 4$ contains a theta graph $\theta_{4,3}$ with both endpoints $p, p' \in P$. Indeed, any such graph consists of paths $p \ell_{1,j} p_{2,j} \ell_{2,j} p'$ for $1 \leq j \leq 3$ where $\ell_{1, j}$ and $\ell_{2,j}$ are not parallel, by our first observation. But, since $p \ell_{1,1} p_{2,1} \ell_{2,1} p' \ell_{2,2} p_{2,2} \ell_{1,2} p$ is a cycle, we must have, by our second observation, that $\ell_{1,1}$ is parallel to $\ell_{2,2}$ and $\ell_{2,1}$ is parallel to $\ell_{1,2}$. Similarly, considering the cycle $p \ell_{1,1} p_{2,1} \ell_{2,1} p' \ell_{2,3} p_{2,3} \ell_{1,3} p$, we must have that $\ell_{1,1}$ is parallel to $\ell_{2,3}$ and $\ell_{2,1}$ is parallel to $\ell_{1,3}$. But then the cycle $p \ell_{1,2} p_{2,2} \ell_{2,2} p' \ell_{2,3} p_{2,3} \ell_{1,3} p$ violates the first observation, since $\ell_{2,2}$ and $\ell_{2,3}$ are parallel. \qedhere

\end{itemize}
\end{proof}

We conclude with a conjecture. It is now quite commonplace to believe that the true value of certain extremal numbers lie below what the classical arguments give. We suspect that this should already be the case for $C_8$, that is, that $\textrm{ex}(n, C_8) = o(n^{5/4})$. Proving this is likely to be exceptionally difficult, but a first step might be to show that no construction of the type studied in this paper can yield  the lower bound. Concretely, we have the following conjecture.  The definition of a line in $\mathbb{F}_q^k$ agrees with that used throughout, namely, a set of the form $\{x + y \cdot z: y \in \mathbb{F}_q\}$ for some $x, z \in \mathbb{F}_q^k$.

\begin{conjecture} 
The maximum number of lines in $\mathbb{F}_q^4$ containing no $C_4$ of lines, that is, four distinct lines $\ell_1, \ell_2, \ell_3, \ell_4$ such that $\ell_i$ and $\ell_{i+1}$ intersect in distinct points for all $1 \leq i \leq 4$ (taken mod $4$), is $o(q^4)$.
\end{conjecture}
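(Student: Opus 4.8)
The plan is to pass to the bipartite point--line incidence graph $G$ of the line system $\mathcal{L}$. Two points lie on at most one common line, so $G$ is automatically $C_4$-free, and a $C_4$ of lines is exactly a $C_8$ of $G$ in which the four lines (hence also the four points) are distinct; since $G$ is $C_4$-free, \emph{every} $C_8$ of $G$ has this form. So the conjecture asserts that a $C_8$-free incidence graph of lines in $\mathbb{F}_q^4$ has $o(q^5)$ edges --- strictly below the $O(q^5)$ that Bondy--Simonovits gives for an arbitrary $C_8$-free bipartite graph with $q^4$ vertices on each side --- and the task is to squeeze out of the geometry the extra saving that excludes an extremal algebraic graph. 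The geometric input is the dependence already used in the paper: if $\ell_1\ell_2\ell_3\ell_4$ is a $C_4$ of lines with directions $v_1,v_2,v_3,v_4\in\mathbb{P}^3$ and consecutive intersection points $p_1,p_2,p_3,p_4$, then $\sum_i(p_{i+1}-p_i)=0$ becomes $\sum_i c_iv_i=0$ with all $c_i\neq 0$, while consecutive $v_i$ are distinct. I would organise everything around $d:=\dim\langle v_1,v_2,v_3,v_4\rangle\in\{2,3\}$. When $d=2$, the affine $2$-plane through $\ell_1$ with direction $\langle v_1,v_2\rangle$ contains $p_2$ and $v_2$, hence $\ell_2$, then $p_3$ and $v_3$, hence $\ell_3$, then $\ell_4$; so all four lines are coplanar, and conversely four lines in an affine plane with the right incidence pattern form a $C_4$ of lines.

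Forbidding these ``planar'' $C_4$s is equivalent to every affine $2$-plane of $\mathbb{F}_q^4$ containing $O(q)$ lines of $\mathcal{L}$; since there are $\sim q^6$ such planes, each line in $\sim q^2$ of them, this yields only $|\mathcal{L}|=O(q^5)$, reproducing Bondy--Simonovits, so $d=3$ is indispensable. For it I would intersect $\mathcal{L}$ with affine $3$-flats $F\cong\mathbb{F}_q^3$: the trace $\mathcal{L}\cap F$ has no $C_4$ of lines inside $F$, each line lies in $\sim q^2$ of the $\sim q^4$ flats, and the average of $|\mathcal{L}\cap F|$ is $\sim|\mathcal{L}|/q^2$. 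Two ingredients are then needed. First, a \emph{sharp three-dimensional bound}: a set of lines in $\mathbb{F}_q^3$ with no $C_4$ of lines has $O(q^2)$ lines. This is the right order --- the $\sim q^2$ lines through a fixed point have all pairwise intersections at that point, and one can even adjoin to such a ``star'' a pruned spread of $\sim q^2$ pairwise-skew lines, none coplanar with the centre, without making a $C_4$ of lines --- so it cannot follow from the planar reduction inside $F$ alone (which gives only $O(q^3)$) and must rest on an honest count of the concurrences of $\mathcal{L}\cap F$, presumably through a point--line incidence estimate in $\mathbb{F}_q^3$. Second, a \emph{stability statement}: whenever a set of lines in $\mathbb{F}_q^3$ with no $C_4$ of lines has $\Omega(q^2)$ members, $\Omega(q^2)$ of them pass through a single point of $F$.

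Granting these, the endgame is a combining argument. If $|\mathcal{L}|=\Omega(q^4)$, a positive fraction of the $3$-flats are near-extremal, each producing a point through which $\Omega(q^2)$ lines of $\mathcal{L}$ pass; since only $\sim q^3$ lines meet any point of $\mathbb{F}_q^4$ and a line lies in $\sim q^2$ flats, there are $\Omega(q)$ distinct such ``centres''. Two \emph{full} stars at $p,p'$ already contain a $C_4$ of lines: take $\ell_1,\ell_2$ through $p$ and $\ell_3,\ell_4$ through $p'$ with $\ell_2\cap\ell_3\neq\emptyset$ and $\ell_4\cap\ell_1\neq\emptyset$, none of them equal to the line $pp'$; the four intersection points $p,\ \ell_2\cap\ell_3,\ p',\ \ell_4\cap\ell_1$ are then distinct, and since the plane $\ell_1\vee\ell_2$ generically misses $p'$ the configuration is not even planar. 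Partial stars, however, can live in disjoint planes through $pp'$ and miss one another, so the argument must use a positive density of the rich centres at once --- counting across all pairs of centres, or iterating --- to force a $C_4$ of lines and so contradict the hypothesis.

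I expect the triple ``sharp three-dimensional bound $+$ stability $+$ combining'' to be the crux, and it is precisely where a fine-tuned algebraic family --- the object the conjecture is meant to exclude --- could slip past a coarse argument. The $O(q^2)$ bound in $\mathbb{F}_q^3$ already has to see the genuinely three-dimensional $C_4$s of lines (directions spanning a $3$-flat), which Bondy--Simonovits does not; I would look for a Szemer\'edi--Trotter-type incidence bound over $\mathbb{F}_q$, or a polynomial-method argument, tuned to concurrences of a line set. And the combining step must be quantitatively robust: the stability description has to be $o(1)$-approximate rather than merely constant-factor, and the interaction estimate between rich centres must withstand configurations that dodge every local obstruction while staying globally large.
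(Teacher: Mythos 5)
This statement is an open conjecture: the paper offers no proof of it and explicitly says that establishing even the graph-theoretic consequence $\mathrm{ex}(n, C_8) = o(n^{5/4})$ ``is likely to be exceptionally difficult.'' So there is no argument of the paper's to compare yours against; the only question is whether your proposal is itself a proof, and it is not. It is a research plan whose three load-bearing ingredients are all left unestablished. (1) The ``sharp three-dimensional bound'' --- that a family of lines in $\mathbb{F}_q^3$ with no $C_4$ of lines has $O(q^2)$ members --- is asserted, not proved. It does not follow from any standard tool you invoke: the unbalanced Bondy--Simonovits/na\"{\i}ve-regularization bound for a $C_8$-free bipartite incidence graph with parts of size $m$ and $q^3$ and $mq$ edges gives only $m = O(q^{7/3})$, and Szemer\'edi--Trotter-type incidence bounds over $\mathbb{F}_q$ are famously weak at exactly this scale. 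You give a construction suggesting $q^2$ is attainable, but no argument that it is optimal, and it is not clear that it is. (2) The ``stability statement'' (near-extremal families in $\mathbb{F}_q^3$ concentrate on a point) is likewise pure conjecture. (3) Even granting both, you concede in your own words that the combining step fails as stated: the rich centres yield stars of density only about $1/q$ inside the full bundle of $\sim q^3$ lines through a point of $\mathbb{F}_q^4$, two such partial stars need not interact, and your proposed fix (``counting across all pairs of centres, or iterating'') is a direction, not an argument.

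None of this is a criticism of the plan as a plan --- the reduction of a $C_4$ of lines to a $C_8$ in the ($K_{2,2}$-free) incidence graph is correct, the dichotomy on $d = \dim\langle v_1,\dots,v_4\rangle$ is a sensible organizing principle, and the observation that the planar case alone only recovers the $O(q^5)$ barrier correctly identifies where the difficulty lives. But each of the three numbered steps is an open problem in its own right, and the conjecture is designed precisely to exclude fine-tuned algebraic families of lines, which is exactly the regime where averaging over flats and constant-factor stability arguments tend to lose the necessary $o(1)$. As a proof this has to be rejected; as a statement of what one would need to prove, it is a reasonable starting point.
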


A similar conjecture can be made for any cycle of length at least $6$.

\vspace{3mm}
{\bf Note added.} After this article was accepted, Tibor Szab\'o drew my attention to his lecture notes~\cite{Sz18}, where Wenger's construction is also described in very similar terms.

\end{document}